\newtheorem{thm}{Theorem}[section]
\newtheorem{lemma}{Lemma}[section]
\newcommand{\itr}{\mathop{\rm int}\nolimits}
\newcommand{\lk}{\mathop{\rm lk}\nolimits}
\newcommand{\vrt}{\mathop{\rm Vert}\nolimits}
\newcommand{\sgn}{\mathop{\rm sign}\nolimits}
\newcommand{\df}{\mathop{\rm \partial f_L}\nolimits}
\newcommand{\dl}{\mathop{\rm [\partial{f_L}]}\nolimits}
\title { Homotopy groups and quantitative Sperner--type lemma}
\author {Oleg R. Musin}
\begin{document}

\date{}
\maketitle

\begin{abstract} 

\end{abstract}
 We consider a generalization of Sperner's lemma for a triangulation  $T$ of $(m+1)$--discs $D$ whose vertices are colored in  $n+2$ colors. A proper coloring of $T$ on the boundary of $D$ determines a simplicial mapping $f:S^m \to S^n$  and the  element $x=[f]$ in $\pi_m(S^n)$. For any $x$ in this homotopy group we define a non--negative integer $\mu(x)$. For some cases this invariant can be found explicitly. Namely, if $m=n$ then this number is  the Brouwer degree of the mapping $f$. For the case $m=3, n=2$ we found a lower bound for $\mu(x)$, where $x$ is the Hopf invariant, and proved that $\mu(1)=\mu(2)=9$.
 
 The main result of this paper is the theorem that the number of fully colored $n$-simplexes in $T$ is not less than $\mu([f])$. To prove this theorem we use a generalization of Pontryagin's theorem for manifolds with respect to their boundaries.
 

\medskip
\noindent {\bf Keywords:}  Hopf invariant, homotopy group of spheres, Sperner lemma, framed cobordism

\section{Introduction}

\subsection{Sperner's  lemma} Sperner's  lemma is a discrete analog of the Brouwer fixed point theorem. This lemma states: 

\medskip

\noindent{\em Every Sperner $(n+1)$--coloring of a triangulation $T$ of an $n$--dimensional simplex $\Delta^n$ contains an $n$--simplex in $T$ colored with a complete set of colors} \cite{Sperner}. 

\medskip

\noindent We found several generalizations of Sperner's lemma [8--15]. 

Let $K$ be a simplicial complex. Denote by $\vrt(K)$ the vertex set of $K$. Let  an $(m+1)$--coloring (labeling) $L$ be a map $L:\vrt(K)\to\{0,1,\ldots,m\}$.  

Let   $\Delta^m$ be an $m$--dimensional simplex with vertices  $\{v_0,...,v_{m}\}$. Setting 
$$
f_{L}(u):=v_k, \; \mbox{ where } \; u\in\vrt(K), \, k=L(u), 
$$
we have a simpicial map $f_{L}:K\to \Delta^m$. We say that an $m$--simplex $s$ in $K$ is {\it fully labeled} if $s$ is labeled with a complete set of labels $\{0,\ldots,m\}$. 

Suppose there are no fully labeled  simplices in $K$. Then $f_L(p)$ lies in the boundary of $\Delta^{m}$. Since the boundary $\partial\Delta^{m}$ is homeomorphic to the sphere ${S}^{m-1}$, we have a continuous map 
 $f_{L}:K\to  {S}^{m-1}$. Denote the homotopy class of $f_L$ in $[K,{S}^{m-1}]$ by $[f_L]$.

Let $T$ be a triangulation of  a manifold $M$ with boundary $\partial M$. Let $L:\vrt(T)\to\{0,\ldots,n+1\}$ be a labeling of $T$.  Define 
$$\partial L:\vrt(\partial T) \to\{0,1,\ldots,n+1\}, \quad  \df: \partial T \to \vrt(\Delta^{n+1}). 
$$  
Observe that if the dimension of $M^{n+1}$ is $n+1$, then  $\dim(\partial M)=n$ and the map $\df:\partial T\to \partial\Delta^{n+1}$ is well defined. By the Hopf degree theorem \cite[Ch. 7]{Milnor} we have $[\partial M,{S}^{n}]={\Bbb Z}$ and $\dl=\deg(\df)\in {\Bbb Z}$. 

\medskip 

\noindent {\bf Theorem A.} \cite[Theorem 3.4]{Mus16}  {\em Let $T$ be a triangulation of an oriented manifold $M^{n+1}$ with nonempty  boundary $\partial M$.  Let $L:\vrt(T)\to\{0,\ldots,n+1\}$ be a labeling of $T$. Then $T$ must  contain at least  $d=|\deg(\df)|$  fully labelled simplices.}

\medskip

In Fig.1 is shown an illustration of Theorem A. Here $n=1$, $M=D^2$ and $d=\dl=3$. The theorem yields that there are at least three fully labeled triangles. 

\medskip

In section 2 we consider a version of Theorem A for spheres. In this case Theorem A is a particular case of Theorem \ref{th2}. 

\medskip 

\begin{figure}
\begin{center}

  \includegraphics[clip,scale=0.9]{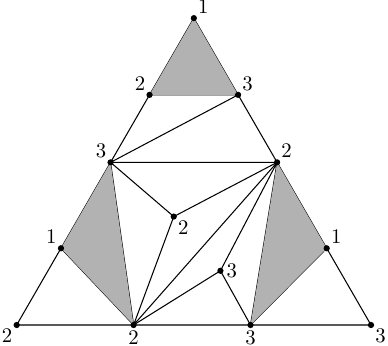}
\end{center}
\caption{An illustration of Theorem A with $d=3$}
\end{figure}

\subsection{Homotopy invariants and Sperner's  lemma} 

Observe that for a Sperner labelling we have $d=1$. Actually, Theorem A can be considered as a  quantitative extension of the Sperner lemma. 

In  \cite {Mus16} with $(n+2)$--covers of a space $X$ we associate certain homotopy classes of maps from $X$ to $n$--spheres. 
These homotopy invariants can be considered as obstructions for extending covers of a subspace $A \subset X$ to a cover of all of $X$. 
We are using these obstructions to obtain generalizations of the classic KKM (Knaster--Kuratowski--Mazurkiewicz) and Sperner lemmas. 
In particular, we  proved the following theorem: 

\medskip 

\noindent {\bf Theorem B.}  (\cite[Corollary 3.1]{Mus16}  \& \cite[Theorem 2.1]{Mus17}) {\em Let $T$ be a triangulation of  a disc $D^{n+k+1}$.  Let 
  $L:\vrt(T)\to\{0,\ldots,n+1\}$  be a labeling of $T$ such that   $T$ has no fully labelled $n$--simplices on the boundary $\partial D\cong S^{n+k}$.  Suppose $[\partial f_L]\ne0$ in $\pi_{n+k}(S^{n})$. Then $T$ must contain at least one fully labeled  $n$--simplex. 
 } 

\medskip 

We observe that for $k=0$ and $M=D$ Theorem A yields Theorem B. However, in this case Theorem A is stronger than Theorem B. In this paper we are going to prove a  quantitative extension of Theorem B. First we consider the case $n=2$ and $k=1$. In Section 3, the following theorem is proved.

\begin{thm} \label{th1} Let $T$ be a triangulation of $D^4$ with a labeling $L:\vrt(T)\to\{A,B,C,D\}$ such that   $T$ has no fully labelled 3--simplices on its boundary $\partial T\cong S^3$.  Let $\df$ on $\partial T$ be of Hopf invariant $d\ne0$. Then $T$ must contain at least 9 fully labeled  3--simplices and for $d\ge2$ this number is at least $3d+3$.
\end{thm}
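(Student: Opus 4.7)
The strategy is to turn the count of fully labelled $3$--simplices into the combinatorial length of a closed PL $1$--manifold in $D^4$, and then to extract quantitative lower bounds from the Pontryagin--Thom interpretation of the Hopf invariant. I would first fix a regular value $p\in\itr\Delta^3$ and extend $f_L$ linearly on every $4$--simplex of $T$, so that $C:=f_L^{-1}(p)$ is well defined. A $4$--simplex $\sigma$ contributes a nonempty preimage only if its five vertices carry all four labels; such a $\sigma$ has exactly one doubled label, and the preimage is a single affine segment whose two endpoints lie on the two fully labelled $3$--faces of $\sigma$ (those obtained by deleting one of the two doubled--label vertices). By hypothesis no fully labelled $3$--simplex lies on $\partial T$, so every such simplex is interior and is shared by precisely two $4$--simplices, both necessarily fully labelled. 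Hence $C$ is a closed PL $1$--manifold $C=C_1\sqcup\cdots\sqcup C_k\subset \itr D^4$, whose vertices are the fully labelled $3$--simplices and whose edges are the segments in the fully labelled $4$--simplices. Since two distinct simplices of a simplicial complex share at most one common face, each simplicial cycle $C_i$ has combinatorial length $\ell_i\ge 3$, giving $V=\sum_{i=1}^k \ell_i\ge 3k$.

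Next, I would encode the Hopf invariant using $C$. The differential $df_L$ trivialises the normal bundle of $C$ (identifying it with $T_p\Delta^3$), and, combined with the deformation retraction $\Delta^3\setminus\{p\}\simeq\partial\Delta^3=S^2$, shows that $f_L$ on the complement of a tubular neighbourhood $N(C)=\bigsqcup N(C_i)$ factors up to homotopy through $S^2$. A Mayer--Vietoris/additivity argument should then decompose the Hopf class as $d=\sum_{i=1}^k d_i$, where $d_i\in\mathbb Z$ is the local Hopf invariant along $C_i$ coming from the restriction to $\partial N(C_i)\cong S^1\times S^2$. The quantitative input would be a combinatorial framing bound obtained by tracking how the normal frame rotates as $C_i$ crosses each shared fully labelled $3$--face: the rotation per edge is controlled, so $|d_i|$ grows at most linearly in $\ell_i$. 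This would yield an inequality of the form
\[
|d|\;\le\;\frac{V-3k}{3},\qquad\text{equivalently}\qquad V\;\ge\;3|d|+3k,
\]
and since $d\ne 0$ forces $C\ne\emptyset$, i.e.\ $k\ge 1$, this gives $V\ge 3d+3$ for every $d\ge 2$.

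The sharper bound $V\ge 9$ when $d=1$ is strictly stronger than $3d+3=6$, and requires ruling out the configuration of a single short cycle realising Hopf invariant $\pm 1$. The plan is to enumerate the finitely many doubled--label patterns along cycles of length $\le 8$ and compute the local framing contribution for each, aiming to show that every such cycle has $d_i=0$; a nontrivial $d_i$ would then force $\ell_i\ge 9$, and together with $k\ge 1$ this gives $V\ge 9$. The main obstacle throughout is the explicit combinatorial framing bound: I would need to track carefully how $df_L$ rotates the normal trivialisation of $C$ across each shared $3$--face, and how these rotations accumulate around a simplicial cycle, converting the topological content of the Pontryagin--Thom description into a concrete simplex count. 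The $d=1$ case is particularly delicate since the extremal short--cycle configurations must be excluded by direct combinatorial inspection; this is where I expect the proof to require the most care.
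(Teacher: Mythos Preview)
Your overall architecture is sensible, and the first paragraph (building the PL $1$--manifold $C=f_L^{-1}(p)$, identifying its vertices with fully labelled $3$--simplices, getting $\ell_i\ge 3$) is correct and is close in spirit to what the paper does. The gap is in the second paragraph, at the step ``$d=\sum_i d_i$ with $d_i\in\mathbb Z$''.

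A framed circle $C_i\subset\itr D^4$ has normal bundle of rank $3$, so its framing invariant lies in $\pi_1(SO(3))=\mathbb Z/2$, not in $\mathbb Z$. Equivalently, your restriction $g|_{\partial N(C_i)}:S^1\times S^2\to S^2$ has degree $\pm1$ on each $S^2$ fibre, and such maps are classified up to homotopy by $\pi_1\bigl(\mathrm{Map}_1(S^2,S^2)\bigr)\cong\pi_1(SO(3))=\mathbb Z/2$. There is no integer ``local Hopf invariant'' attached to $C_i$ alone; the Mayer--Vietoris/Stokes argument you sketch fails because $g^*\omega$ is not exact on $D^4\setminus N(C)$. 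Since circles in a simply connected $4$--manifold do not link, no extra integer can be recovered from the relative position of the $C_i$ either. At best you obtain $d\bmod 2=\sum d_i$, which cannot give $|d|\le (V-3k)/3$.

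The paper avoids this by never localising inside $D^4$. It defines the invariant $\mu(d)$ entirely on the boundary: for a simplicial map $g:S^3\to S^2$ with $H(g)=d$, the solid--torus region $\Pi(g,s)\subset S^3$ is a closed tetrahedral chain, and the Hopf invariant is read off as the \emph{rotation number} of the curve $g^{-1}(A)$ on the torus $\partial\Pi$. Here the normal bundle has rank $2$, the framing is genuinely $\mathbb Z$--valued, and a per--vertex rotation bound gives $m\ge 3d+3$ directly (Lemma~2.1). The passage from this boundary estimate to the count of fully labelled $3$--simplices in $D^4$ is then isolated as a separate statement (Lemma~2.2, a special case of Theorem~1.3), proved via relative framed cobordism: one pushes $C$ to $S^3$ along the surface $f_L^{-1}([p,q])$, and only after this push do integer invariants appear. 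Your proposed ``rotation per edge'' bound is really a statement about that pushed--forward link in $S^3$, not about $C$ itself, and the translation between the two counts is exactly the nontrivial step you are missing.

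For $d=1$ the paper does not enumerate short cycles; it quotes the Madahar--Sarkaria minimal simplicial Hopf map, which realises $\mu=9$, together with the lower bound $\mu(d)\ge 9$ for $d\ne 0$.
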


In Section 4, for manifolds $X$ without boundary and manifolds $Y$ with  non--empty boundary, we consider framed cobordisms $\Pi_k(X)$ and relative (with respect to the boundary) framed cobordisms  $\Pi_k(Y)$. In particular, we prove the following extension of Pontryagin's theorem \cite{Pont}.  

\begin{thm} \label{RCor} 
For all $k\ge0$ and $n\ge 1$ we have
$$
\Pi_k(D^{n+k+1})\cong \pi_{n+k+1}(D^{n+1},S^n)\cong \pi_{n+k}(S^n)\cong \Pi_k(S^{n+k})
$$
\end{thm}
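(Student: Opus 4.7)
The plan is to establish each of the three isomorphisms in turn and then check they fit together coherently. The middle one (via the homotopy long exact sequence) and the rightmost one (classical Pontryagin) are standard; the essential new content is a relative Pontryagin--Thom correspondence for the leftmost isomorphism.

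For $\pi_{n+k+1}(D^{n+1}, S^n) \cong \pi_{n+k}(S^n)$, I would apply the long exact sequence of homotopy groups of the pair $(D^{n+1}, S^n)$. Since $D^{n+1}$ is contractible, $\pi_j(D^{n+1}) = 0$ for all $j$, so the connecting homomorphism $\partial$ is an isomorphism, sending a relative class $[f:(D^{n+k+1}, S^{n+k}) \to (D^{n+1}, S^n)]$ to its boundary $[f|_{S^{n+k}}] \in \pi_{n+k}(S^n)$. The isomorphism $\pi_{n+k}(S^n) \cong \Omega_k^{fr}(S^{n+k})$ is the classical Pontryagin theorem~\cite{Pont}, which sends $[g]$ to the framed cobordism class of $g^{-1}(q)$ for a regular value $q \in S^n$, framed by pulling back a chosen basis of $T_qS^n$.

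For the leftmost isomorphism $\Omega_k^{fr}(D^{n+k+1}, S^{n+k}) \cong \pi_{n+k+1}(D^{n+1}, S^n)$, I would carry out a relative Pontryagin--Thom construction. Given a smooth representative $f:(D^{n+k+1}, S^{n+k}) \to (D^{n+1}, S^n)$, we may assume after a homotopy of pairs that $f$ and $f|_{S^{n+k}}$ are simultaneously transverse to a chosen regular value $p \in S^n$. Choosing a short arc $\gamma$ emanating from $p$ transversely into the interior of $D^{n+1}$, the preimage $f^{-1}(\gamma)$ is a framed $(k+1)$-submanifold of $D^{n+k+1}$ with boundary $(f|_{S^{n+k}})^{-1}(p)$ lying in $S^{n+k}$, and it represents the associated element of $\Omega_k^{fr}(D^{n+k+1}, S^{n+k})$. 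The inverse direction is given by the Thom--Pontryagin collapse applied to a tubular neighborhood of a representing framed submanifold, producing a pair map. Well-definedness on both sides follows from standard transversality and cobordism arguments adapted to manifolds with boundary.

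The main obstacle is verifying that the three isomorphisms compose consistently, i.e.\ that the induced map $\Omega_k^{fr}(D^{n+k+1}, S^{n+k}) \to \Omega_k^{fr}(S^{n+k})$ coincides with the natural geometric boundary operation $(M, \nu) \mapsto (\partial M, \nu|_{\partial M})$. This reduces to checking that restriction of a smooth map to the boundary commutes with taking the Pontryagin preimage of a regular value, so that the LES connecting homomorphism $\partial$ matches the geometric boundary map. Carefully tracking the framing near $\partial M$, where the inward normal to $D^{n+k+1}$ must be kept consistent, is the chief technical point.
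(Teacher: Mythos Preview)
Your proposal is correct and follows essentially the same route as the paper: the middle isomorphism via the long exact sequence of the pair $(D^{n+1},S^n)$, the rightmost via classical Pontryagin, and the leftmost via a relative Pontryagin--Thom correspondence (which the paper isolates as its Theorem~\ref{RPont}, constructing the framed relative pair $(f^{-1}(z),f^{-1}(y))$ joined by $W=f^{-1}(\gamma)$ for an arc $\gamma$ from an interior regular value $z$ to a boundary regular value $y$). Your extra paragraph verifying that the composite matches the geometric boundary map is a point the paper does not explicitly address; it is a reasonable addition, though not strictly needed for the statement as written, which only asserts the individual bijections.
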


In Section 5 we prove a simplicial extension of Theorem \ref{RCor}.  In fact, this theorem is the main step in proving Theorem 1.3, a quantitative version of the generalized Sperner lemma.


\subsection{$\mu$--invariant} 

Let $f:T_1\to T_2$ be a simplicial map, where $T_1$ and $T_2$ are triangulations of spheres $S^m$ and $S^n$ respectively. Let $s$  be an $n$--simplex of $T_2$. Then $ Z(f,s):=f^{-1}(s)$  is a simplicial  subcomplex of $T_1$.  Let  $O\in\itr(s)$, where  $\itr(s)$ denote {\em the interior of $s$.}   We say that a simplex $t$ in $Z(f,s)$ is {\em internal} if $t$ contains a point from  $f^{-1}(O)$. {\em Denote  by $\mu(f,s)$ the number of  internal $n$--simplexes in $Z(f,s)$. }
 

Let $a\in \pi_m(S^n)$ and $\mathcal F_a$ be the space of all simplicial maps  $f:S^m\to S^n$ with $[f]=a$ in $\pi_m(S^n)$.  Define 
$$
\mu(m,T_2,a):=\min\limits_{f\in\mathcal F_a,s} {\mu(f,s)}.
$$

\medskip

We obviously have $\mu(m,T_2,0)=0$ and $\mu(m,T_2,-a)=\mu(m,T_2,a)$. 

In this paper we consider the case when $T_2$ is the boundary $\partial\Delta^{n+1}$ of the $(n+1)$--simplex. Then $f$ is determined by coloring the vertices of $T_1$ in (n+2) colors: $0,1,...,n+1$. In this case we denote $\mu(m,T_2,a)$ by  $\mu(m,n,a)$.  

Let $T$ be a triangulation of an $(m+1)$--disc $D^{m+1}$ and $L$ be a labeling   $L:\vrt(T)\to\{0,\ldots,n+1\}$.  Suppose $L$ is such that $T$ has no fully labelled $(n+1)$--simplexes (i.e. simplexes with labels $0,...,n+1$) on the boundary of $D^{m+1}$. Then a simplicial map $\partial f_L:\partial T\cong S^m\to \partial\Delta^{n+1}\cong S^n$ is well defined and $\dl\in \pi_m(S^n)$.

\begin{thm} \label{th2} Let $T$ be a triangulation of   $D^{n+k+1}$ and
  $L:\vrt(T)\to\{0,\ldots,n+1\}$  be a labeling of $T$ such that   $T$ has no fully labelled $n$--simplexes on its boundary.  Suppose $\dl\ne0$ in $\pi_{n+k}(S^{n})$. Then $T$ must contain at least $\mu(n+k,n,\dl)$ fully labeled  $(n+1)$--simplexes. 
\end{thm}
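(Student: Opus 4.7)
I would prove this via a simplicial Pontryagin--Thom argument leveraging Theorem~\ref{RCor}. First, pick a generic interior point $q\in\mathrm{int}(\Delta^{n+1})$ and consider the preimage $M:=f_L^{-1}(q)\subset\mathrm{int}(T)$. Because $\partial f_L$ sends $\partial T$ into $\partial\Delta^{n+1}$, the set $M$ avoids $\partial T$ and is a closed framed $k$-dimensional submanifold of $\mathrm{int}(T)$, carrying a natural simplicial decomposition from the triangulation of $T$: a $j$-cell of $M$ corresponds to an $(n{+}1{+}j)$-simplex of $T$ whose labels exhaust $\{0,\ldots,n+1\}$. In particular, the 0-cells of $M$ are in bijection with the fully labeled $(n+1)$-simplices of $T$, because on such a simplex $\sigma$ the affine map $f_L|_\sigma\colon\sigma\to\Delta^{n+1}$ has $q$ as a unique preimage. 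Denoting by $N$ the number of fully labeled $(n+1)$-simplices of $T$, we thus have $\#\{\text{vertices of }M\}=N$.

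Next, I would invoke Theorem~\ref{RCor}: the class $[M]\in\Omega_k^{fr}(D^{n+k+1},S^{n+k})$ corresponds under the canonical isomorphism to $[\partial f_L]=a\in\pi_{n+k}(S^n)$. Using the second isomorphism of Theorem~\ref{RCor}, $\Omega_k^{fr}(D^{n+k+1},S^{n+k})\cong\Omega_k^{fr}(S^{n+k})$, I would push $M$ by a framed cobordism through the contractible disc $T$ into the boundary $\partial T=S^{n+k}$, producing a closed framed $k$-submanifold $M'\subset S^{n+k}$ representing $a$. Carrying the simplicial structure along, $M'$ can be arranged to have no more vertices than $M$. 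The simplicial Pontryagin--Thom dual of $M'$ is then a simplicial map $g\colon S^{n+k}\to S^n$ with $[g]=a$, for which $M'$ is the Pontryagin preimage of an interior point of some $n$-simplex $s$; the interior $n$-simplices of $\Pi(g,s)$ are in bijection with the vertices of $M'$, so $\mu(g,s)\le N$. By Definition 1.1, $\mu(a)\le\mu(g,s)\le N$, which is the claimed inequality.

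The main obstacle is this middle step: pushing the interior preimage $M$ to a framed submanifold of $\partial T$ without increasing the vertex count, in a way that is compatible with a simplicial Pontryagin--Thom correspondence. This requires the simplicial refinement of the isomorphism $\Omega_k^{fr}(D^{n+k+1},S^{n+k})\cong\Omega_k^{fr}(S^{n+k})$, and is presumably the content of the ``simplicial extension of Theorem~\ref{RCor}'' announced for Section~4. Once that simplicial Pontryagin--Thom machinery is established, the inequality $N\ge\mu(a)$ follows directly from the bijection between vertices of $M$ and fully labeled $(n+1)$-simplices worked out in the first paragraph.
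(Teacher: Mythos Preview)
Your proposal is correct and follows essentially the same route as the paper. The paper likewise takes $M_0=f_L^{-1}(z)$ for $z\in\mathrm{int}(\Delta^{n+1})$, connects it to $N_0=\partial f_L^{-1}(y)$ on the boundary via the explicit cobordism $W_0=f_L^{-1}([z,y])$, and uses Lemma~\ref{L43} (the shelling argument along components of $W_0$) to realize the $S$--framing of each piece $M_C$ inside $S^{n+k}$ with $[M_C]=[N_C]$; summing over components gives a simplicial representative of $a=[\partial f_L]$ whose count of internal $n$--simplices is bounded by the number of fully labeled $(n+1)$--simplices, yielding $N\ge\mu(a)$. The step you flag as the main obstacle---pushing $M$ to $\partial T$ while controlling the simplicial data---is exactly what Lemmas~\ref{L41}--\ref{L43} supply; the paper's concrete device is to use the face $s_0=\{v_1,\dots,v_{n+1}\}$ and the framing coming from $\Pi(f_L,s_0)$ rather than $\Pi(f_L,\Delta)$, which drops one label and lands the $S$--framing in $S^{n+k}$.
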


The proof of this theorem is given in Section 5.


 \section{The degree of a map and $\mu$--invariant.}
 
 In this section we consider the case $m=n$.  Let  $f: S^n \to S^n$ be a continuous map. Then  $f$ induces a homomorphism $f_* : \pi_n(S^n) \to \pi_n(S^n)$. Since $\pi_n(S^n)=\mathbb Z$, we see that $f_*:\mathbb Z \to \mathbb Z$ must be of the form $f_*(k)=dk$, where $d\in \mathbb Z$. This $d$ is then called the {\em degree} of $f$ and denoted by $\deg(f)$.

 The {\em Hopf degree theorem} states that homotopy classes of continuous
maps from a closed connected oriented smooth $n$-manifold $M$ to the $n$--sphere
are classified by their degree \cite[Ch. 7]{Milnor}.  In particular, a pair of continuous maps $f, g : S^n \to S^n$  are
homotopic if and only if $\deg(f) = \deg(g)$. Thus,   $\deg(f)=[f]\in\pi_n(S^n)$. 



\begin{thm} \label{mudeg}  Let $n$ and $d$ be positive integers. Then $\mu(n,d)=d.$ 
\end{thm}

\begin{proof}  {\bf 1.} Let $T_1$  and $T_2$ be  oriented triangulations of $S^n$.  Let $f:\vrt(T_1)\to \vrt(T_2)$ be a simplicial map. 
Take any $n$--simplex $s$ of $T_2$. As above, $Z(f,s)=f^{-1}(s)$ denote the set of preimages of $s$ in $T_1$. 

Observe that if $Z(f,s)$ is not empty, then for every $n$--simplex $t\in Z(f,s)$ we have $f(t)=s$ and $f|_t:t\to s$ defines a simplicial isomorphism. 
Then the sign of $f|_t$  is well defined,  
 $\sgn(f|_t)=1$ if the map preserves the orientation of $t$ and is $(-1$) otherwise.  The Hopf degree theorem yields that 
 $$
 \deg(f)=\sum\limits_{t\in Z(f,s)} {\sgn(f|_t)}. \eqno  (2.1)
 $$
 
It is easy to see that (2.1) implies an inequality   
 $\mu(f,s)\ge d$ for all $f$ with $\deg(f)=d$ and $n$--simplexes  $s$ in $T_2$.  Hence $\mu(n,T_2,d)\ge d.$ Thus, for a particular case $T_2=\partial\Delta^{n+1}$ we have $$\mu(n,d)\ge d.$$

\noindent {\bf 2.}   It remains to prove that for all positive integers $n$ and $d$ there is a triangulations $T$  of $S^n$, $f:T\to \Theta_n:=\partial\Delta^{n+1}$ with  $\deg(f)=d$ and $s$ in $\Theta_n$ such that the number of $n$--simplexes in  $Z(f,s)$ is exactly $d$. 
  
  
 We start from $n=1$. Let $T$ be a polygon with $3d$ vertices and $T_2=\Theta_2$ be a a triangle with vertices $A, B, C$.   If labels of $T$ are $ABCABC...ABC$, then 
 $|Z(f,A)|$ (as well as $|Z(f,B)|$ and  $|Z(f,C)|$) is $d$, i.e. $\deg(f)=d$. 
 
\medskip
 
\noindent {\bf 3.}  Suppose the theorem is true for $n=k$. Then for every $d>0$ there are a  triangulation of $T$ of $S^k$ and   $L:\vrt(T)\to\{0,\ldots,k+1\}$ with $\deg(f_L)=d$ and $\mu(f_L,s)=d$, where $f_L:T\to\Theta_k$ is a simplicial map defined by $L$ and $s$ is a $k$--simplex in $\Theta_k$ with vertices $1,..,k+1$. Then the theorem for n=k+1 follows from the following 

\medskip

\noindent {\bf Proposition.} {\em Let $T$ and $L$ be as above. Then there is a triangulation $T_v$ of $S^{k+1}$ and  $L_v:\vrt(T_v)\to\{0,\ldots,k+2\}$ such that $|\vrt(T_v)|=|\vrt(T)|+1$ and $\deg(f_L)=\deg(f_{L_v})$. Moreover, if there is a $k$--simplex $s$ in $\Theta_{k}$ with $\mu(f_L,s)=d$, then there is a $(k+1)$--simplex $s_v$ in $\Theta_{k+1}$ with $\mu(f_{L_v},s_v)=d$.}

\medskip
 
 \noindent {\bf 4.} Indeed, let $CT$ be the (simplicial) cone space over $T$. Then $CT$ is the cone over $S^k$ and is homeomorphic to the closed $(k+1)$--disc. Denote the vertex of the cone by $v$. 
 
 Let take one of the vertices of $T$ as the vertex of the cone. We denote this triangulation of the $(k+1)$--disc by $CT'$. Since $T$ is  the common boundary of these two triangulations, we have that $T_v=CT\cup_T CT'$ is a triangulation of a $(k+1)$--sphere.  
 
Define $L_v(u)=L(u)$ for all $u\in \vrt(T)$ and $L_v(v)=k+2$. Now $L_v$ is defined for all vertices of $T_v$. 

Without loss of generality we may assume that $s$ is a simplex with vertices $\{1,...,k+1\}$. Denote by $s_v$ a $(k+1)$--simplex with vertices $\{1,...,k+2\}$ in $\Theta_{k+1}$.  It is easy to see that  $\mu(f_{L_v},s_v)=d$. That completes the proof. 
  \end{proof} 

  



\section{Hopf invariant and tetrahedral chains} 
The {\em Hopf invariant} of a smooth or simplicial map $f:S^3\to S^2$ is the linking number 
$$
H(f):=\lk(f^{-1}(x),f^{-1}(y)) \in {\mathbb Z},  \eqno (2.1)
$$
where $x\ne y \in S^2$ are generic points \cite{Hopf}. Actually, $f^{-1}(x)$ and $f^{-1}(y)$ are the disjoint inverse image circles or unions of circles. 

The projection of the {\em Hopf fibration} $S^1\hookrightarrow S^3 \to S^2$ is a map  $h:S^3\to S^2$ with Hopf invariant 1. The Hopf invariant classifies the homotopy classes of maps from $S^3$ to $S^2$, i.e. $H:\pi_3(S^2)\to  {\mathbb Z}$  is an isomorphism. 

\medskip

We assume that $S^3$ and $S^2$ are triangulated and $f:S^3\to S^2$ is a simplicial map. 
Let $s$  be a 2--simplex of $S^2$ with vertices $A, B$ and $C$.  In fact, $Z=Z(f,s)=f^{-1}(s)$ is a simplicial complex  in $S^3$ and its interior $\itr(Z)$ is an open 3--submanifold. Moreover, $\itr(Z)$ is the disjoint union of $\ell\ge0$ open triangulated solid tori, in other words $\Pi$ consists of $\ell$ {\em tetrahedral chains}, with a labeling $L:\vrt(\Pi)\to\{A,B,C\}$. 

We observe that the Hopf invariant of $Z$ is well defined by (2.1) and $H(Z)=H(f)$.  Using this fact in \cite{MusWL} is considered a linear algorithm for computing the Hopf invariant. 

 Since the equality $\pi_3(S^2)={\mathbb Z}$ allows us to identify integers with elements of the group $\pi_3(S^2)$, {\em  we write $\mu(d):=\mu(3,d)$ bearing in mind that $d$ is an element of $\pi_3(S^2)$.}

\begin{lemma} \label{L21} $\mu(1)=\mu(2)=9$ and $\mu(d)\ge 3d+3$ for all $d\ge3$. 
\end{lemma}

\begin{proof}  {\bf 1.} Let  $f:S^3\to S^2$ be a simplicial map, $s=ABC$. Let  $P$ be the closure of a connected component of $\itr(Z(f,s))$. 
Then 
\begin{itemize}
\item $P$ is a triangulated solid torus  in $S^3$  that is a  closed oriented labeled tetrahedral chain. 
\item Every vertex of $P$ lies on its boundary $\partial P$ and is labeled with $A$, $B$, or $C$. 
\item All internal 2--simplices (triangles) of $P$ are fully labeled, i.e. have three labels $A, B, C$. 
\end{itemize}

\noindent {\bf 2.} Take any internal triangle $T_1$ of $P$. This triangle is oriented and we assign the order of its vertices $v_1v_2v_3$  in the positive direction. Without loss of generality,  we may assume that $$L(T_1)=L(v_1)L(v_2)L(v_3)=ABC.$$ 
In accordance with the orientation of the chain the next vertex $v_4$ is uniquely determined as well as $v_5$ and so on. Then we have a closed chain of vertices $v_1,v_2,...,v_m$ which uniquely determines the triangulations of $\partial P$  and $P$. 

Let $M:=L(v_1)L(v_2)...L(v_m)$. Then $M$ is a sequence (``word'') which contains only three letters  $A, B, C$.  We observe that the triangulation $T_P$ of $\partial P$ and sequence of internal triangles $T_1,T_2,,...,T_m$ of $P$ are uniquely determined by $M$. Indeed, if $T_k=v_iv_jv_k$ and $L(v_i)=L(v_{k+1})$, then $v_iv_{k+1}$  is an edge of $\partial T_P$ and $T_{k+1}=v_{k+1}v_jv_k$.  For instance, if $L(v_4)=A$ then  $T_2=v_2v_3v_4$, if $L(v_4)=B$ then $T_2=v_1v_4v_3$, and if $L(v_4)=C$ then $T_2=v_1v_2v_4$. 

Let  $\gamma_x:=f_L^{-1}(x)\cap P$, where $x\in s$. Then  $\gamma_A$ is a loop of vertices $v_i$ of $P$ with $L(v_i)=A$. Moreover,  $\gamma_A$ is a cycle in $T_P$. Since a cycle in a graph is at least of three vertices, we have
$$
m=m_A+m_B+m_C \ge 9, \quad  m_A:=|\vrt(\gamma_A)|, \; m_B:= |\vrt(\gamma_B)|,   \; m_C:=  |\vrt(\gamma_C)|.  
$$

\medskip

\noindent {\bf 3.}   Madahar and Sarkaria \cite{MS} give the minimal simplicial map $h_1:\tilde S^3_{12}\to S^2_4$ of  Hopf  invariant one (Hopf map) that has $\mu(h_1,s)=9$, see   \cite[Fig. 2]{MS}.  Madahar \cite{Mad} gives the minimal simplicial map $h_2:S^3_{12}\to S^2_4$ of  Hopf  invariant two with $\mu(h_2,ABC)=9$. Hence $\mu(1)\le9$ and $\mu(2)\le9$. 

Let  $\mu(f,s)=\mu(d)$ with $d\ne0$.  Let  $P$ be a connected component of $Z(f,s)$ with $H(Z)\ne0$.  Since $\mu(d)\ge m(P) \ge 9$, we have $\mu(1)=\mu(2)=9$.  











\medskip

\noindent {\bf 4.}  We can assume that $\gamma_A$ contains the minimum number  of vertices whenever $H(P)=n.$   Now we show that if $n>0$, then  $m_A\ge n+1$. 

Let $O$ be an internal point of $s$.  Since $G_O:=H_1(S^3\setminus\itr(P))\cong H_1(S^3\setminus \gamma_O)\cong {\mathbb Z}$, $G_O$ is generated by a single element $\alpha$.  Then $[\gamma_A]=r\alpha$ in $G_O$, where $r\in  {\mathbb Z}$. Actually, $r=r(A,O)$ is the rotation number of $\gamma_A$ about $\gamma_O$ and we have an equality  $r(A,O)=\lk(\gamma_A,\gamma_O)$. 


We have a chain of vertices $\gamma_A=\{A_1,...,A_{m_A}\}$ on $\partial P$ with $f(A_i)=A$. 
  Note that the rotation angle from $A_i$ to $A_{i+1}$ about $\gamma_O$ is less than $2\pi$. Therefore, the sum of rotation angles of this chain is less than $2\pi m_A$ and the rotation number is at most $m_A-1$.  Thus $m_A-1\ge n$ and $m\ge 3n+3$.

\medskip

\noindent {\bf 5.} Let $P_1,...,P_\ell$ be  connected components of $\Pi$ with $n_i=H(P_i)\ne0$. Then $d=H(Z)=n_1+...+n_\ell$.  By {\bf 4} we have 
$$\mu(f,s)\ge \mu(P_1)+...+\mu(P_\ell)\ge (3|n_1|+3)+...+(3|n_\ell|+3)\ge 3d+3\ell\ge 3d+3.$$ Thus, $\mu(d)\ge 3d+3$.  
 \end{proof}

\begin{lemma} \label{L22} Let $T$ be a triangulation of $D^4$.  Let $L:\vrt(T)\to\{A,B,C,D\}$ be a labeling such that   $T$ has no fully labelled 3--simplices on the boundary $\partial T\cong S^3$.  If  the Hopf invariant of $\partial f_L$ on  $\partial T$ is $d$, then $T$ must contain at least $\mu(d)$ fully labeled  3--simplices  (tetrahedra). 
\end{lemma}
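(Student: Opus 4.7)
My approach is to lift the ``chain of tetrahedra'' analysis of Lemma~\ref{L21} from $S^3$ to the 4-disc $T$.

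Fix a smaller 3-simplex $s' \subset \mathrm{int}(\Delta^3)$ and set $\Pi := f_L^{-1}(s')$. Because $f_L$ is affine on each simplex and $s'$ lies in the interior of $\Delta^3$, only fully labeled 4-simplices $\sigma$ of $T$ meet $\Pi$, and each contributes a 4-cell $\sigma \cap \Pi \cong D^3 \times [0,1]$; neighbouring 4-cells are glued across 3-disks, each corresponding to a fully labeled 3-simplex of $T$. Under the hypothesis that $\partial T$ has no fully labeled 3-simplex, every fully labeled 3-simplex of $T$ is interior, and the two 4-simplices meeting it are both fully labeled (each must contain all four labels because it contains the fully labeled tetrahedron). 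Since a fully labeled 4-simplex has exactly two fully labeled 3-faces (those opposite the two vertices carrying the repeated label), $\Pi$ decomposes as a disjoint union of closed chains $\Pi = \Pi_1 \sqcup \cdots \sqcup \Pi_r$, each homeomorphic to $D^3 \times S^1$ and made of $n_j$ fully labeled 4-simplices joined along $n_j$ fully labeled 3-simplices. The number $N$ of fully labeled 3-simplices of $T$ is therefore $N = \sum_{j=1}^{r} n_j$.

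To each chain $\Pi_j$ I attach an integer $h_j \in \mathbb{Z}$ defined, following formula (2.1), as the linking number in $T$ of the two disjoint circles $f_L^{-1}(x) \cap \Pi_j$ and $f_L^{-1}(y) \cap \Pi_j$ for two generic points $x, y \in \mathrm{int}(s')$. The complement $T \setminus \mathrm{int}(\Pi)$ has boundary $\partial T \sqcup \bigsqcup_j \partial \Pi_j$, and on it $f_L$ factors through $\Delta^3 \setminus \mathrm{int}(s') \simeq S^2$; applying Theorem~\ref{RCor} to the pair $(T, \partial T) \to (\Delta^3, \partial \Delta^3)$ then gives $\sum_{j=1}^{r} h_j = d$. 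For each chain $\Pi_j$ individually, the cyclic sequence of its fully labeled tetrahedra, together with their vertex labels, is a closed labeled chain of exactly the type analysed in Lemma~\ref{L21}, so the same rotation-number argument (now tracking the rotation of, say, the $D$-vertex positions along the chain) yields $n_j \ge \mu(h_j)$. Combining this with the subadditivity $\mu(a+b) \le \mu(a) + \mu(b)$ that follows from the connect-sum construction in Remark~2.1,
$$
N = \sum_{j=1}^{r} n_j \;\ge\; \sum_{j=1}^{r} \mu(h_j) \;\ge\; \mu\Bigl(\sum_{j=1}^{r} h_j\Bigr) = \mu(d),
$$
which is the desired bound.

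The main obstacle is establishing the identity $\sum_j h_j = d$. Identifying the inner linking invariants $h_j$ with the boundary Hopf invariant $d$ of $\partial f_L$ really is Theorem~\ref{RCor} specialized to the cobordism $T \setminus \mathrm{int}(\Pi)$, but one must verify the orientation conventions for $\partial T$ versus the $\partial \Pi_j \cong S^2 \times S^1$ carefully, and check that linking invariants of circles inside 4-dimensional solid tori match the genuine $\pi_3(S^2)$-class of $\partial f_L$. A secondary subtlety is adapting the rotation-number argument of Lemma~\ref{L21} — originally stated for chains in $S^3$ with a 3-letter alphabet — to chains in $T$ with a 4-letter alphabet; here one must verify that fixing the distinguished letter $D$ on each fully labeled tetrahedron and recording the rotation of the residual triangle $\tau_i \setminus \{v_D\}$ around the chain reduces the problem to the 3-letter case already handled in Lemma~\ref{L21}.
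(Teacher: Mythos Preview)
The paper's own proof of Lemma~\ref{L22} is a single line: it is the case $n=2$, $k=1$ of Theorem~\ref{th2}, whose proof is in Section~4. Your plan is effectively to reproduce that Section~4 argument in this special case, but two of the steps do not go through as written.

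The serious one is your definition of $h_j$. You take it to be the linking number in $T\cong D^4$ of the circles $f_L^{-1}(x)\cap\Pi_j$ and $f_L^{-1}(y)\cap\Pi_j$, but linking numbers of disjoint $1$--cycles in a $4$--manifold are always zero: a $2$--chain bounding one circle generically misses the other, since $2+1-4<0$. So every $h_j$ vanishes as you have defined it, regardless of $d$, and the identity $\sum h_j=d$ fails trivially. Formula~(2.1) only makes sense in a $3$--manifold. The correct invariant of $\Pi_j$ is the $S$--framed cobordism class of its core circle, and turning that into an integer in $\pi_3(S^2)$ requires transferring the chain to $\partial T\cong S^3$; this is exactly the content of the paper's Lemma~\ref{L43}, and your appeal to Theorem~\ref{RCor} on $T\setminus\mathrm{int}(\Pi)$ does not furnish it.

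Secondly, $n_j\ge\mu(h_j)$ is not a consequence of the rotation-number argument of Lemma~\ref{L21}. That argument proves a lower bound on $\mu$ (namely $\mu(d)\ge 3d+3$), whereas what you need is a lower bound on a specific chain by $\mu$. The latter follows from the \emph{definition} of $\mu$ as a minimum, but only once $\Pi_j$ has been realised as $\Pi(g,s)$ for some simplicial $g\colon S^3\to S^2$ with $[g]=h_j$ --- again the missing transfer to $S^3$. Once that transfer is supplied, the subadditivity detour through $\sum\mu(h_j)\ge\mu(\sum h_j)$ is also unnecessary: the disjoint union of the transferred chains directly realises the class $d$ in $S^3$, so $N\ge\mu(d)$ follows in one step from the definition, as at the end of the paper's proof of Theorem~\ref{th2}.
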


\begin{proof} This lemma is a particular case of Theorem \ref{th2}.   We have $d=[\partial f_L]\in \pi_3(S^2)={\mathbb Z}$. Then there are at least $\mu(d)$ fully labeled  3--simplices. 
\end{proof}

It is easy to see that Lemmas \ref{L21} and \ref{L22} yield Theorem \ref{th1}.

\section{Framed cobordisms and homotopy group of spheres}

A {\em framing} of an $k$--dimensional smooth submanifold $M^k$ in a smooth oriented $X^{n+k}$ is a smooth map which for any $x\in M$ assigns a basis of the normal vectors to $M$ in $X$ at $x$:
$$
v(x)=\{v_1(x),...,v_{n}(x)\},
$$
where vectors $\{v_i(x)\}$ form a basis of $T_x^\perp(M) \subset T_x(X)$. 

A {\em framed cobordism} between framed $k$--manifolds $M^k$ and $N^k$ in $X^{n+k}$ is a $(k + 1)$--dimensional submanifold $C^{k+1}$ of $X\times[0,1]$ such that 
$$
\partial C=C\cap (X\times[0,1])=(M\times\{0\})\cup(N\times\{1\}) \eqno (4.1)
$$
together with a framing on $C$ that restricts to the given framings on $M\times\{0\}$ and $N\times\{1\}$. This
defines an equivalence relation on the set of framed $k$--manifolds in $X$.  Let $\Pi_k(X)$ denote the set of equivalence classes.


The main result concerning $\Pi_k(X)$ is the theorem of Pontryagin \cite{Pont}: {\em  $\Pi_k(X^{n+k})$ with $n\ge 1$ and $k\ge0$ corresponds bijectively to the set $[X,S^n]$  of homotopy classes of maps $X\to S^n$.} 
In particular, 
$$
\Pi_k(S^{n+k})\cong \pi_{n+k}(S^n). 
$$

Let $f:X^{n+k}\to S^n$ be a smooth map and $y\in S^n$ be a regular image of $f$. 
Let $v=\{v_1,...,v_{n}\}$ be a positively oriented basis for the tangent space $T_{y}S^n$.  Note that  for every $x\in f^{-1}(y)$, $f$ induces the isomorphism  between  $T_{y}S^n$ and $T_x^\perp f^{-1}(y)$. Then $v$ induces a framing of the submanifold $M=f^{-1}(y)$ in $X$. This submanifold together with a framing is called the {\em Pontryagin manifold associated to $f$ at $y$.} We denote it by $\Pi(f,y)$. 

Actually, the Pontryagin theorem states that 
\begin{enumerate}
 \item Under the framed cobordism $\Pi(f,y)$ does not depend on the choice of $y\in S^n$. 
 \item Under the framed cobordism $\Pi(f,y)$ depends only on homotopy classes of $[f]$. 
 \item $\Pi:[X,S^n]\to \Pi_k(X)$ is a bijection. 
 \end{enumerate}

\medskip

 Let $Y^{n+k+1}$ be a manifold with boundary.  Now we define {relative framed cobordisms}  of $Y$ with respect to its boundary. 

Let $M^k$  be a  submanifolds of  $Y\setminus\partial Y$ with a framing $\{v_0(x),v_1(x),...,v_n(x)\}$.  
Let $N^k$  be a  submanifolds of  $\partial Y$ with a framing $\{u_1(x),...,u_n(x)\}$. We say that 
$(M,N)$ is a {\em framed relative pair} if there are  submanifold  $W$ in $Y$ and $n$--framing  $\omega=\{w_1(x),...,w_n(x)\}$ of $W$  such that  
$\partial W=M \sqcup N$, $\omega|_M=\{v_1,...,v_n\}$ and $\omega|_N=\{u_1,...,u_n\}$. Then the framed cobordisms of framed relative pairs define the set of equivalence classes  $\Pi_k(Y)$.  

\medskip

\medskip

\begin{thm} \label{RPont} Let  $Y^{n+k+1}$ with $n\ge 1$ and $k\ge0$ be a compact orientable smooth manifold with boundary $\partial Y$. Then $\Pi_k(Y)$ corresponds bijectively to the set $[(Y,\partial Y),(D^{n+1},S^n)]$  of relative homotopy classes of maps  $(Y,\partial Y)$ to $(D^{n+1},\partial D^{n+1})$.
\end{thm}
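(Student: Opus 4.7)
The plan is to adapt Pontryagin's original argument to the relative setting by constructing mutually inverse maps $\Phi:[(X,\partial X),(D^{n+1},S^n)]\to \Omega_k^{fr}(X,\partial X)$ and $\Psi$ in the opposite direction, via preimage and tubular-neighborhood constructions respectively.

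To define $\Phi$, take a smooth representative $f:(X,\partial X)\to(D^{n+1},S^n)$. Pick a regular value $y_0\in\itr(D^{n+1})$ of $f$ together with a regular value $y_1\in S^n$ of $f|_{\partial X}$, and connect them by a smooth embedded arc $\gamma\subset D^{n+1}$ transverse to $f$ and meeting $S^n$ transversally only at $y_1$. Set
\[
M=f^{-1}(y_0)\subset\itr(X),\quad N=(f|_{\partial X})^{-1}(y_1)\subset\partial X,\quad W=f^{-1}(\gamma)\subset X.
\]
A positively oriented basis of $T_{y_0}D^{n+1}$ pulls back to an $(n+1)$-framing of $M$; a positively oriented basis of $T_{y_1}S^n$ pulls back to an $n$-framing of $N$; and an $n$-framing of the normal bundle of $\gamma$ in $D^{n+1}$ extending these choices pulls back to an $n$-framing of $W$ satisfying the compatibility required in the definition of a framed relative pair. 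The resulting triple $(M,N,W)$ represents $\Phi([f])$.

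For $\Psi$, given a framed relative pair $(M,N,W)$, the $n$-framing of $W$ identifies a tubular neighborhood of $W$ in $X$ with $W\times D^n$, meeting $\partial X$ along $N\times D^n$ with a corner along $N\times\partial D^n$. Fix a reference arc $\gamma_0\subset D^{n+1}$ from $y_0\in\itr(D^{n+1})$ to $y_1\in S^n$ together with a trivialization of its normal $D^n$-bundle in $D^{n+1}$. Map $W\times D^n\to D^{n+1}$ by sending $W$ onto $\gamma_0$ (with $M\to y_0$ and $N\to y_1$) and each normal disc diffeomorphically onto the corresponding transverse disc of $\gamma_0$. Extend by sending the complement of the tubular neighborhood to a fixed base point on $S^n$ near $y_1$, and smooth the result to obtain a map $f:(X,\partial X)\to(D^{n+1},S^n)$ representing $\Psi([(M,N,W)])$.

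Well-definedness and bijectivity then follow from standard parameter arguments adapted from Pontryagin's proof. Independence of $\Phi$ from the auxiliary choices is obtained by connecting any two choices through a generic one-parameter family and taking preimages, yielding a relative framed cobordism in $X$; independence from the homotopy class of $f$ comes from applying the same construction to a relative homotopy $F:X\times[0,1]\to D^{n+1}$ with $F(\partial X\times[0,1])\subset S^n$, regarded as a map of pairs. The identities $\Phi\circ\Psi=\mathrm{id}$ and $\Psi\circ\Phi=\mathrm{id}$ are verified as in the absolute case: the preimage construction recovers the triple used to build the standard $f$, and a general smooth $f$ is relatively homotopic to the standard form produced by $\Psi$ from its Pontryagin preimage. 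The main obstacle is a careful treatment of the corners along $N\subset\partial X$: one needs a relative transversality theorem ensuring that $y_1$ is simultaneously a regular value of $f$ and of $f|_{\partial X}$ and that $\gamma$ meets $S^n$ transversally, a tubular neighborhood theorem for the compact manifold-with-corners $W$ respecting $\partial W=M\sqcup N$ and its embedding in the manifold-with-boundary $X$, and a relative version of the homotopy-to-preimage argument preserving $f(\partial X)\subset S^n$ throughout. Once these ingredients are in place, the remainder of the argument follows Pontryagin's original proof \cite{Pont}.
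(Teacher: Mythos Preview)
Your proposal is correct and follows essentially the same approach as the paper: both construct the correspondence by choosing regular values $z\in\itr(D^{n+1})$ and $y\in S^n$, a framed arc $\gamma$ joining them, and taking $(f^{-1}(z),f^{-1}(y))$ with $W=f^{-1}(\gamma)$ as the associated framed relative pair, then invoking the standard Pontryagin argument for well-definedness and bijectivity. If anything, your write-up is more detailed than the paper's, which gives only the construction of $\Pi(f,y,z,\gamma)$ and otherwise defers to the absolute case and to the simplicial version in the following section; your explicit description of the inverse $\Psi$ via the tubular-neighborhood collapse and your enumeration of the corner/transversality issues are not spelled out in the paper.
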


\begin{proof} The  proof of Pontryagin's theorem  is cogently described in many textbooks, for instance, in books by Milnor \cite{Milnor}, Hirsch  \cite{Hir}, Ranicki \cite{Ran}, and very interesting lecture notes by Putman \cite{Putman}.  Actually,  this theorem can be proved by very similar arguments as the Pontryagin theorem. 

Let $f:(Y,\partial Y)\to(D^{n+1},S^{n})$ be a smooth map, $y\in S^n$ be a regular value of $\partial f$, $z\in D^{n+1}\setminus S^n$ be a regular value of $f$, $v=\{v_1,...,v_{n}\}$ be a positively oriented basis for the tangent space $T_{y}S^n$ and $v_0$ be a vector in ${\mathbb R}^n$ such that $\{v_0,v_1,...,v_{n}\}$ is its basis. Let $\gamma$ be a smooth non-singular path in $D^{n+1}$ framed with $v$, connecting $z$ and $y$ such that the tangent vector to $\gamma$ at $z$ is $v_0$.  Then $\Pi(f,y,z,\gamma)$ can be defined as a framed relative pair $(f^{-1}(z),f^{-1}(y))$ with $W=f^{-1}(\gamma)$. 








To prove the theorem we can use the same steps 1, 2, 3 as above. It can be shown that $\Pi: [(Y,\partial Y),(D^{n+1},S^n)]\to \Pi_k(Y)$ is well--defined and is a bijection. In the next section we consider details of this construction for simplicial maps. 
\end{proof}

\noindent{\em Proof of Theorem \ref{RCor}.} Pontryagin's theorem and Theorem \ref{RPont} yield bijective correspondences $\Pi_k(S^{n+k})\cong \pi_{n+k}(S^n)$ and $\Pi_k(D^{n+k+1})\cong \pi_{n+k+1}(D^{n+1},S^n)$. The well--known isomorphism $\pi_{n+k+1}(D^{n+1},S^n)\cong \pi_{n+k}(S^n)$ follows from the long exact sequence of relative homotopy groups: 
$$
... \to  0=\pi_{n+k+1}(D^{n+1})   \to \pi_{n+k+1}(D^{n+1},S^n) \to  \pi_{n+k}(S^n)  \to  \pi_{n+k}(D^{n+1})=0 \to ...
$$
This completes the proof. \hfill $\Box$




\section {Proof of the main theorem}

Theorem \ref{RCor} can be considered as a smooth version of a quantitative Sperner--type lemma. In this section we consider the   bijective correspondence $\Pi_k(D^{n+k+1})\cong \Pi_k(S^{n+k})$ for labelings (simplicial maps). 

\medskip

Let $T$ be a triangulation of a smooth manifold $X^{n+k}$. An $S$--{\em{framing}} of a $k$--dimensional submanifold $M^k\hookrightarrow X$ is a simplicial embedding  $h:P\to T$, where $P\cong M\times D^n$ with  $\vrt(P)\subset\partial P$, and a labelling $L:\vrt(P)\to\{1,...,n+1\}$  such that (i) an $n$--simplex of $P$ is internal iff  
it is fully labeled, (ii) $M$ lies in the interior of $h(P)$ and (iii) $h^{-1}(M)\cong M$.

An $S$--{\em framed cobordism} between two $S$--framed manifolds $M^k$ and $N^k$ can be defined by the same way as the framed cobordism in (4.1). If between  $M$ and $N$ there is  an $S$--framed cobordism then we write $[M]=[N]$.   Let  $\Pi_k^{S}(X)$ denote the set of equivalence classes under  $S$--framed cobordisms. 

Let $f:T\to Q$ be a simplicial map, where $Q$ is a triangulation of  $S^n$ . For any simplex $s$ in $Q$ can be defined a simplicial complex $Z=Z(f,s)$ in $X$, see Definition 1.1. Let $s'\subset s$ be an $n$--simplex with vertices $v_1,...,v_{n+1}$. If $Z$ is not empty, then it is an  $(n+k)$--submanifold of $X$, all vertices of $Z$ lie on its boundary and $f:\vrt(\Pi)\to\{v_1,...,v_{n+1}\}$. Moreover, if $y\in\itr(s')$  then $M=f^{-1}(y)$ is a $k$--dimensional submanifold of $\Pi\subset X$. Thus $Z$ is an $S$--framing of $M$. 

 There is a natural framing of $M$. Let $u=\{u_1,...,u_n\}$, where $u_i$ is a vector $yv_i$. Then $u$ induces a framing of $M$ in $X$. Hence we have a correspondence  between $Z(f,s)$ and $\Pi(f,y)$. It is not hard to see that this correspondence yield a bijection.

\begin{lemma}\label{L41} $\Pi_k^{S}(X) \cong  \Pi_k(X)$. 
\end{lemma}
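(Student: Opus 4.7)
The plan is to construct mutually inverse maps $\Phi\colon \Omega_k^{Sfr}(X) \to \Omega_k^{fr}(X)$ and $\Psi$ in the opposite direction, relying on the correspondence $\Pi(f,s) \leftrightarrow \Pi(f,y)$ spelled out in the paragraph preceding the statement, and then to verify that each map respects the appropriate cobordism equivalence.

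For $\Phi$, I would take an $S$-framed representative $(M, h\colon P \hookrightarrow T, L)$ with $P \cong M \times D^n$, select any interior fully labeled $n$-simplex $s'$ of $P$ with vertices $v_1,\ldots,v_{n+1}$ and barycenter $y$, and set $\Phi([M]_{Sfr}) := [(M, u)]_{fr}$ where $u_i = \overrightarrow{y v_i}$. Well-definedness requires two checks: (i) a different choice of interior fully labeled simplex $s''$ inside the same $P$ gives a framing related to the original by a path of bases obtained by linearly interpolating the barycenter inside the convex fully labeled region, hence a framed cobordism; (ii) an $S$-framed cobordism $W \subset X \times [0,1]$ between $M_0$ and $M_1$ yields a smoothly framed cobordism by applying the same barycentric construction fibrewise along $W$.

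For $\Psi$, given a smooth framed representative $(M, v)$, I would invoke the tubular neighborhood theorem to identify a neighborhood $N$ of $M$ in $X$ with $M \times \mathbb{R}^n$ compatibly with $v$, then apply the simplicial approximation theorem after sufficiently many barycentric subdivisions of $T$ to obtain a subcomplex $P \subset T$ of product form $M \times \Delta^n$ sitting inside $N$, with vertices labeled by the projection to $\vrt(\Delta^n)$. The subdivision must be fine enough that labeled vertices of $P$ land on $\partial P$, as required by the $S$-framing definition, and that internal $n$-faces are precisely the fully labeled ones. The identities $\Phi\Psi = \mathrm{id}$ and $\Psi\Phi = \mathrm{id}$ are then local computations inside a tubular neighborhood: starting from a smooth framing, building $P$, and reading off the barycentric framing recovers the original framing up to a contractible change of basis, hence up to framed cobordism; conversely, the $P$ produced by $\Psi \circ \Phi$ can be compared with the original $P$ through a subdivision, which supplies the $S$-framed cobordism.

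The main obstacle is the well-definedness of $\Psi$. Two tubular neighborhoods of $M$ compatible with $v$, or two fine enough subdivisions of $T$, typically yield $S$-framed manifolds that do not coincide on the nose, so one must produce an explicit $S$-framed cobordism between them. The standard route is to construct $M \times [0,1]$ inside $X \times [0,1]$ together with a compatible tubular neighborhood, triangulate and label it so that its two boundary slices realize the given $S$-framings, and verify the internal-face/fully-labeled condition for this cobordism. This is the simplicial analogue of the isotopy uniqueness of smooth tubular neighborhoods, and it requires the most delicate bookkeeping of the subdivision, the product structure, and the labels on the boundary.
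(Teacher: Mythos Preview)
Your proposal is correct and follows the same idea the paper indicates: the map $\Phi$ you build from an $S$--framing to a smooth framing via the vectors $u_i=\overrightarrow{yv_i}$ is exactly the correspondence $\Pi(f,s)\leftrightarrow\Pi(f,y)$ spelled out in the paragraph immediately preceding the lemma. The paper, however, gives no further argument: it simply states ``It is not hard to see that this correspondence yield a bijection'' and records the lemma without a proof environment. Your sketch of the inverse $\Psi$ via tubular neighborhoods and simplicial approximation, and your discussion of well-definedness under cobordism and under change of subdivision, supply precisely the details the paper omits; there is nothing in the paper to compare them against beyond the one-line assertion.
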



We observe that relative $S$--framining, relative $S$--framed cobordisms  and a correspondence between relative $S$--framed and relative framed manifolds can be defined by a similar way. It can be shown that $$\Pi_k^{S}(Y) \cong  \Pi_k(Y).$$ 

Let us take a closer look at the bijection
$$\Pi_k^{S}(D^{n+k+1})\cong \Pi_k^{S}(S^{n+k})\cong \pi_{n+k}(S^n). $$ 
Let $T$ be a triangulation of  $D^{n+k+1}$ and $L:\vrt(T)\to\{0,\ldots,n+1\}$  be a labeling of $T$ such that   $T$ has no fully labelled $n$--simplices on the boundary $\partial T \cong S^{n+k}$.  Then we have simplicial maps:
$$
f_L:T\cong D^{n+k+1}\to \Delta^{n+1}\cong D^{n+1}, \qquad \df: \partial T\cong S^{n+k}\to \partial\Delta^{n+1}\cong S^n, 
$$
where $\Delta=\Delta^{n+1}$ denote the $(n+1)$--simplex with vertices $\{v_0,v_1,...,v_{n+1}\}$. Hence the homotopy class $\dl \in \pi_{n+k}(S^n)$.

Let $s_0$ denote the $n$--simplex of $\Delta$ with vertices  $\{v_1,...,v_{n+1}\}$.  Define
$$
 M_0:=f_L^{-1}(z), \: z\in\itr(\Delta'), \quad N_0:=\df^{-1}(y), \; y\in \itr(s'_0), \quad W_0:=f_L^{-1}([z,y]). 
$$

\begin{lemma} \label{L42} We have that  $(M_0,N_0)$ is an $S$--framed relative pair  in $(D^{n+k+1},S^{n+k})$ and $F([(M_0,N_0)])= [N_0]$ defines a bijection 
$$F:\Pi_k^{S}(D^{n+k+1})\to \Omega_k^{S}(S^{n+k}).$$ 
\end{lemma}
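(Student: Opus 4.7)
The plan is to proceed in three stages: verify that $(M_0,N_0)$ together with $W_0$ really is an $S$-framed relative pair, then check that the assignment $F$ is well-defined on equivalence classes, and finally establish bijectivity by reducing to the smooth Pontryagin correspondence via Lemma \ref{L41} and Theorem \ref{RPont}.

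For the first stage, I would check the three defining conditions. Since $z$ lies in $\itr(\Delta')$, a small simplex interior to $\Delta$ and disjoint from the face $s_0$, the preimage $M_0=f_L^{-1}(z)$ sits in the interior of $T\cong D^{n+k+1}$. Dually, $y\in\itr(s_0')$ forces $N_0=\df^{-1}(y)\subset\partial T\cong S^{n+k}$. The segment $[z,y]$ meets $\partial\Delta$ only at $y$, so $W_0=f_L^{-1}([z,y])$ is a $(k+1)$-dimensional submanifold of $T$ with $\partial W_0=M_0\sqcup N_0$. The $S$-framing on $W_0$ is produced from the simplicial structure: taking a small $(n+1)$-disc transverse to $[z,y]$ inside $\Delta$ whose preimage carries the labelling inherited from $L$ gives an embedding of $W_0\times D^n$ whose fully labeled $n$-simplices are precisely the internal ones, and this restricts to the canonical $n$-framing of $M_0$ coming from $\Delta'$ and to that of $N_0$ coming from $s_0'$.

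For the second stage, well-definedness of $F$ reduces to two naturality checks. First, varying the data $(z,y,[z,y])$ within the contractible space of admissible choices yields an $S$-framed cobordism between the resulting relative pairs, so $[(M_0,N_0)]$ depends only on $L$; second, if two relative pairs are cobordant via some $(W',\omega')$, restricting $\omega'$ to the portion of $\partial W'$ lying in $\partial X\times[0,1]$ produces an $S$-framed cobordism of the boundary components, whence $[N_0]$ is an invariant of $[(M_0,N_0)]$. Both checks are direct translations of the analogous smooth statements, adapted through the correspondence $\Pi(f,s)\leftrightarrow\Pi(f,y)$ described just before Lemma \ref{L41}.

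To conclude, I would combine Lemma \ref{L41} and its asserted relative analogue to identify $\Omega_k^{Sfr}(D^{n+k+1},S^{n+k})\cong\Omega_k^{fr}(D^{n+k+1},S^{n+k})$ and $\Omega_k^{Sfr}(S^{n+k})\cong\Omega_k^{fr}(S^{n+k})$, after which $F$ becomes the boundary restriction $(M,N)\mapsto N$. Via Pontryagin's theorem and Theorem \ref{RPont}, this restriction corresponds to the connecting homomorphism $\partial\colon\pi_{n+k+1}(D^{n+1},S^n)\to\pi_{n+k}(S^n)$, which is an isomorphism by the long exact sequence already used in the proof of Theorem \ref{RCor}. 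The main obstacle I foresee is the simplicial step rather than the homotopical one: making $W_0=f_L^{-1}([z,y])$ an honest $(k+1)$-manifold with the prescribed boundary and a coherent $S$-framing may require a stellar subdivision along the path, or a small perturbation of $[z,y]$ to avoid lower-dimensional strata of $\Delta$, so that the inherited labelling satisfies condition (i) of an $S$-framing globally along $W_0$ and not just near $z$ and $y$.
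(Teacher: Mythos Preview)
Your proposal is correct and follows the same basic approach as the paper, but you should be aware that the paper's own proof of this lemma is little more than a two-sentence sketch: it simply observes that since $z$ and $y$ are regular values of $f_L$ and $\df$, the preimages $M_0$ and $N_0$ are $k$-manifolds with cobordism $W_0$, and that $\Pi(f_L,\Delta)$ and $\Pi(\df,s_0)$ furnish the $S$-framings. The paper does not explicitly argue well-definedness or bijectivity inside the proof of Lemma~\ref{L42}; these are treated as consequences of the surrounding framework (Lemma~\ref{L41}, Theorem~\ref{RPont}, and the long exact sequence already invoked for Theorem~\ref{RCor}), which is exactly the route you take in your stages~2 and~3. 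So your argument is not different in substance, only more explicit---and the technical concern you raise at the end (making $W_0$ an honest $S$-framed manifold along the whole segment) is a point the paper glosses over entirely.
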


\begin{proof} Since $z$ and $y$ are regular values of  $f_L$  and $\df$, we have that $M_0$ and $N_0$ are manifolds of  $k$ dimensions with a cobordism $W_0$.  In fact, $Z(f_L,\Delta)$ and $Z(\df,s_0)$ define $S$--framings of $M_0$ and $N_0$. 
\end{proof} 






\begin{lemma} \label{L43}  Let $C$ be a connected component of $W_0$ such that $N_C:=\partial C\cap N_0\ne\emptyset$.  Then  $Z(f_L,s_0)$  induces an $S$--framing of $M_C:=\partial C\cap M_0$ in $S^{n+k}$ and $[M_C]=[N_C]$ in $\Pi_k^{S}(S^n)$. 
\end{lemma}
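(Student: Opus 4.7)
The plan is to use $C$ itself as the $S$--framed cobordism realizing $[M_C]=[N_C]$, with the ``$S$--framing of $M_C$ in $S^{n+k}$'' obtained by pushing $M_C$ along $C$ to the boundary. First I would check the manifold structure: since $z\in\itr(\Delta')$ and $y\in\itr(s_0')$ are regular values of $f_L$ and $\df$ respectively, the preimage $W_0=f_L^{-1}([z,y])$ is a compact $(k+1)$--dimensional simplicial submanifold of $D^{n+k+1}$ with boundary $M_0\sqcup N_0$; restricting to any connected component $C$ gives $\partial C=M_C\sqcup N_C$ with $M_C\subset\itr(D^{n+k+1})$ and $N_C\subset S^{n+k}$, both of dimension $k$.

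Next, I would equip $C$ with an $S$--framing. The $(n+k+1)$--dimensional neighborhood $\Pi(f_L,\Delta)=f_L^{-1}(\Delta')$ of $W_0$ carries a simplicial $n$--frame in the label directions transverse to the segment $[z,y]\subset\Delta$; pulling this transverse $n$--frame back along the simplicial map $f_L$ yields a normal $n$--frame along $C$ inside $D^{n+k+1}$. Its restriction to $M_C$ recovers the $S$--framing of $M_0$ induced by $\Pi(f_L,\Delta)$, and its restriction to $N_C$ recovers the $S$--framing of $N_0$ induced by $\Pi(\df,s_0)$. The crucial compatibility along $N_C$ uses the hypothesis that $\partial T$ has no fully labelled $n$--simplices, which forces all relevant frame directions to stay tangent to $S^{n+k}$ at $N_C$.

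Finally, the framed pair $(M_C,N_C)$ together with the cobordism $C$ represents a class in $\Omega_k^{Sfr}(D^{n+k+1},S^{n+k})$, which under the bijection developed earlier in the section corresponds to $[N_C]\in\Omega_k^{Sfr}(S^{n+k})$. Pushing $M_C$ along $C$ into $S^{n+k}$ produces a submanifold $M_C'\subset S^{n+k}$ whose $S$--framing is exactly the one induced by $\Pi(f_L,s_0)$, because the pushed copy lies in the boundary trace of the $(n+k)$--dimensional neighborhood $\Pi(f_L,s_0)$ of $C$. By construction $M_C'$ and $N_C$ cobound $C$ (viewed inside a collar of $S^{n+k}$ in $D^{n+k+1}$), so they are $S$--framed cobordant in $S^{n+k}$, yielding $[M_C]=[N_C]$. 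The main obstacle is the combinatorial precision of the push--off step: verifying that $C$ can be straightened simplicially into such a collar so that the two $S$--framings on $M_C'$, from $\Pi(f_L,s_0)$ and from transport along $C$, agree. This compatibility should ultimately reduce to the contractibility of $[z,y]\subset\Delta$ together with the simplicial structure of $f_L$.
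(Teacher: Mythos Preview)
Your proposal is correct and follows essentially the same approach as the paper: use $C$ as the cobordism with $\partial C=M_C\sqcup N_C$, frame it via the pullback of the transverse $n$--frame, and then push $M_C$ along $C$ into $S^{n+k}$. The paper obtains the product structure on $C$ from linearity of $f_L$ on $f_L^{-1}([z,y))\cong M_0\times[0,1)$ and phrases the push-off you flag as the main obstacle as a \emph{shelling along $C$ of fully labeled $n$--simplices} that contracts $M_C$ to $N_C$ while keeping each intermediate boundary in $S^{n+k}$; this is precisely the ``combinatorial precision'' you anticipate, and the paper's treatment of it is at the same level of sketchiness as yours.
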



\begin{proof} Note that $\partial C=M_C\cup N_C$. Actually, $C$ is a cobordism between $M_C$ and $N_C$ in $D^{n+k+1}$. We obviously have that if $M_C$ is empty then $N_C$ is  null--cobordant, i.e. $[N_C]=0$ in  $\Pi_k^{S}(S^n)$. 

Let $\Gamma$ be the closure of $f_L^{-1}(\itr(\Delta))$ and $K_C:=C\cap\Gamma\subset Z(f_L,s_0)$. Note that $Z(f_L,s_0)$  induces an $S$--framing of $K_C$ with $(n+1)$--labels. Let  $t:=[z,y)$ in $\Delta$ and $C_t:=f_L^{-1}(t)$. Since $f_L$ is linear on $C_t$ we have $C_t\cong M_0\times[0,1)$.  That  induces an $S$--framing of $M_C$ with $(n+1)$--labels.

The last of the proof to show that this $S$--framing of $M_C$ is in $S^{n+k}$. We have that $S$--framing of $N_C$ is in $S^{n+k}$. It can be proved that using  shelling along $C$  of fully labeled $n$-simplices we can contract $M_C$ to $N_C$ such that at each step the boundary lies in $S^{n+k}$. That completes the proof. 
\end{proof} 


\medskip

\noindent{\em Proof of Theorem \ref{th2}.}  Lemma \ref{L41} and Pontryagin's theorem yield 
$$
\Pi_k^{S}(S^n)\cong \Pi_k(S^n)\cong \pi_{n+k}(S^n). 
$$

Let $\dl=a$ in $ \pi_{n+k}(S^n).$ Then $[N_0]=a$ in  $\Omega_k^{S}(S^n)$. If $\{C_1,...,C_k\}$ are connected components of $W_0$ then Lemma \ref{L43} yields the equality 
$$
[M_{C_1}] +...+[M_{C_k}] = [N_{C_1}] +...+[N_{C_k}]=[N_0]=a. 
$$
Therefore, $Z(f_L,\Delta)$ contains at least $\mu(n+k,n,a)$ $n$--simplices with labels $1,...,n+1$. The same we have for every $(n+1)$--labeling. Since $Z(f_L,\Delta)$  contains all fully labeled $(n+1)$--simplexes, it is not hard to see that this number is not less than $\mu(n+k,n,a)$. 
\hfill $\Box$

:

\section{Concluding remarks and open problems}

\subsection{Minimal simplicial maps of degree $d$.}   
Let $T$ be a triangulation of $S^n$ and $L$ be a labeling   $L:\vrt(T)\to\{0,\ldots,n+1\}$.    Then a simplicial map $f_L:T \to \partial\Delta^{n+1}\cong S^n$ is well defined.  Let $d$ be a positive integer. Denote by $\lambda(n,d)$  the least number of vertices of $T$ such that $\deg(f_L)=d$.

 It is clear  that $\lambda(n,1)=n+2$ and $\lambda(1,d)=3d$. Madahar and  Sarkaria \cite{MS2}  proved that $\lambda(2,2)=7$ and $\lambda(2,d)=2d+2$ for $d\ge3$. 

\medskip 

\noindent{\bf Open problem 6.1.} {\em Find $\lambda(n,d)$  for $n\ge3$ and $d\ge2$. }

\medskip 

It is easy to see that the Proposition in the proof of Theorem  \ref{mudeg} yields that 
$$
\lambda(n+1,d)\le \lambda(n,d)+1 \eqno (6.1)
$$

If we apply this inequality for $n=1$, we get $\lambda(2,d)\le 3d+1$. Here the equality holds only for $d=2$.

By enumerating the cases we were able to show that
$$\lambda(3,2)=8, \quad  \lambda(3,3)=9, \quad \lambda(3,4)=10.$$

In the first two cases we obtained equality in inequality (6.1): $\lambda(3,2)= \lambda(2,2)+1$,  $\lambda(3,3)=\lambda(2,3)+1$. However, in the third case we have  $\lambda(3,4)=\lambda(2,4)$. 


\medskip

In  \cite{MS2}  the equality $\lambda(2,d)=2d+2$ for $d\ge3$ is proven. The existence of a minimal triangulation with this number of vertices is proved separately for even and odd $d$.

Note that the construction of such a triangulation for odd $d$ can easily be generalized to the $n$--dimensional case.  Let $d=kn+1, k\ge0$.  Now replace the triangles with $n$--simplices (see \cite{MS2}, Fig. 2) and then we see that at each step we add $n+2$ new vertices. This implies the following formula for the number of vertices of the triangulation 
$$M(n,d) = \frac{n+2}{n}(n+d-1)=a(n)d+b(n), \quad a(n)=\frac{n+2}{n}, \; b(n)=\frac{n^2+n-2}{n}.$$ 
 Therefore, for all  $d=kn+1, k\ge0$, we have
$$\lambda(n,d) \le a(n)d+b(n).\eqno (6.2)$$ 

It is not clear whether there is equality in (6.2) for all $n>0$ and $k$? However, we see the equality for $n=3$ and $k=1$: $\lambda(3,4)=4a(3)+b(3)=10$. 

Note that $\lambda(1,d)=a(1)d$ and $\lambda(2,d)=a(2)d+2$.  Perhaps, a similar equality holds for all $n$. 

\medskip

\noindent {\bf Conjecture 6.1.} $\lambda(n,d)=a(n)d+o(d).$

\medskip

\subsection{The lower bound on $\mu(d)$.}

We are not sure, is the lower bound $\mu(d)\ge 3d+3$ in Lemma \ref{L21} sharp for $d>2$? 

Madahar \cite{Mad} gives a simplicial map $h_{d}:S^3_{6d}\to S^2_4$  of Hopf  invariant $d\ge2$ with $\mu(h_d,ABC)=6d-3$. That yields $\mu(d)\le 6d-3$.

(Note that $\mu(h_d,ABD)=\mu(h_d,ACD)=\mu(h_d,BCD)=(2d-1)(3d-2)$, i.e. grows quadratically in $d$.)  

Now we show that $\mu(h_d,ABC)>\mu(d)$ for $d>3$. Indeed, if we take for even $d$ the connected sum of $d/2$ spheres $S^3_{12}$ with labeling $h_2$  and $(d-1)/2$ spheres $S^3_{12}$ and one $\tilde S^3_{12}$ with labeling $h_1$ for odd $d$, then we obtain the triangulation and labeling of $S^3$ with $\mu=9\lceil d/2\rceil$. Hence we have 
$\mu(d)\le 9\lceil d/2\rceil$.

\medskip 

\noindent{\bf Open problem 6.2.} {\em Find $\mu(d)$ for all $d\ge3$. }



\subsection{Minimal simplicial map of Hopf invariant two.} 
Madahar constructed a triangulation of $S^3$ with 12 vertices and a simplicial  mapping $h_{2}:S^3_{12}\to S^2_4$  of Hopf  invariant $2$  \cite[Fig. 2--5]{Mad}.  Observe that  this triangulation {\em is not geometric.} 

Indeed, in this case the Hopf invariant of $h_2$ is $\lk(h_2^{-1}(A), h_2^{-1}(B))=2$. However, for geometric triangulations  $h_2^{-1}(A)$ and  $h_2^{-1}(B)$ are triangles $A_0A_1A_2$ and $B_0B_1B_2$,  therefore their linking number can be 0 or $\pm 1$, i.e. it cannot be 2.  

This reasoning shows that for geometric triangulations the question of the equality $\mu(2)=9$ remains open.

\medskip 

\noindent{\bf Open problem 6.3.}  {\em Find $\mu(2)$ for geometric triangulations of $S^3$. }

\subsection{Find $\mu(n+1,n,x)$.} 
It is well known that for $n\ge 3$ we have 
$$\pi_{n+1}(S^n)=\mathbb Z_2.$$
Then $\pi_{n+1}(S^n)$ has only one non-trivial element $x$.

\medskip

\noindent{\bf Open problem 6.4.}  {\em Find $\mu(n+1,n,x)$ for $n\ge3$.}

\subsection{Find $\mu(n+2,n,x)$.} 

By the Freudenthal suspension theorem we have $\pi_{n+k+1}(S^{n+1})=\pi_{n+k}(S^{n})$ for $n\ge k+2$. In particular 
$$\pi_{n+2}(S^n)=\mathbb Z_2, \; n\ge2. $$
As above in this homotopy group we have only one non-trivial element $x$. It is an interesting problem to find a general formula for $\mu$ depending on the dimension $n$. The case $n=2$ is of particular interest.

\medskip

\noindent{\bf Open problem 6.5.}  {\em Find $\mu(n+2,n,x)$ for $n\ge2$.}

\medskip

\medskip 

\medskip

\noindent{\bf Acknowledgment.} I  wish to thank Taras Panov and Andrew Putman for helpful discussions and useful comments.

\medskip

 \medskip

 O. R. Musin,  University of Texas Rio Grande Valley, School of Mathematical and
 Statistical Sciences, One West University Boulevard, Brownsville, TX, 78520, USA.

 {\it E-mail address:} oleg.musin@utrgv.edu


\begin{thebibliography}{99}

\bibitem{DeLPS}
J. A. De Loera, E. Peterson, and F. E. Su, A Polytopal Generalization of Sperner's Lemma,  {\it J. of Combin. Theory Ser. A,} {\bf 100} (2002), 1-26. 

\bibitem{Hir}
M. W. Hirsch, Differential topology, {\em Graduate Texts in Mathematics,} {\bf 33}, Springer, New York, 1994.

\bibitem{Hopf}
H. Hopf, \"Uber die Abbildungen von Sph\"aren niedriger Dimension, {\em Fund. Math.}, {\bf 25} (1935), 427--440.

\bibitem{KKM}
B. Knaster, C. Kuratowski, S. Mazurkiewicz, Ein Beweis des Fixpunktsatzes f\"ur $n$--dimensionale Simplexe, {\it Fundamenta Mathematicae}  {\bf 14} (1929): 132--137.

\bibitem{Mad}
K. V. Madahar, Simplicial maps from the 3--sphere to the 2--sphere, {\em Adv. Geom.}, {\bf 2} (2012), 99--106. 

\bibitem{MS}
K. V. Madahar and K. S. Sarkaria, A minimal triangulation of the Hopf map and its 
application, {\it Geom. Dedicata,} {\bf 82} (2000), 105--114.

\bibitem{MS2}
K. V. Madahar and K. S. Sarkaria, Minimal simplicial self-maps of the 2-sphere {\it Geom. Dedicata,} {\bf 84} (2001), 25--31.

\bibitem{Milnor} 
J.W. Milnor, Topology from the differentiable viewpoint, The University Press of Virginia, Charlottesville, VA, 1965.

\bibitem{Mus14} 
{O. R. Musin,} Around Sperner's lemma, preprint, arXiv:1405.7513 

\bibitem{MusQ}
O. R. Musin, Sperner type lemma for quadrangulations, {\em Mosc. J.  Combinatorics and Number Theory,}  {\bf 5:1} (2015), 26--35

\bibitem{Mus15}
{O. R. Musin,} Extensions of Sperner and Tucker's lemma for manifolds, {\it J. of Combin. Theory Ser. A,} {\bf 132} (2015), 172--187.  

\bibitem{MusS}
O. R. Musin, Generalizations of Tucker--Fan--Shashkin lemmas,  {\it Arnold Math. J.}, {\bf 2:3} (2016),  299--308. 

\bibitem{Mus16}
{O. R. Musin,} Homotopy invariants of covers and KKM--type lemmas,   {\it  Algebraic \& Geometric Topology}, {\bf 16:3} (2016) 1799--1812.

\bibitem{Mus17}
{O. R. Musin,} KKM type theorems with boundary conditions,  {\em J. Fixed Point Theory Appl.}, {\bf 19} (2017), 2037--2049.

\bibitem{MusWL}
{O. R. Musin,} Computing winding and linking numbers, in preparation. 

\bibitem{MW18}
{O. R. Musin} and Jie Wu,  Cobordism classes of maps and covers for spheres,  {\em Topology and its  Applications},  {\bf 237} (2018), 21--25.

\bibitem{Pont}
L. S. Pontrjagin, Smooth manifolds and their applications in homotopy theory, {\em A.M.S. Translations} {\bf 11} (1955), 1--114. 

\bibitem{Putman}
A. Putman, Homotopy groups of spheres and low-dimensional topology, preprint, 2014

\bibitem{Ran}
A. A. Ranicki, Algebraic and Geometric Surgery, Oxford University Press, 2002.

\bibitem{Sperner}
E. Sperner, Neuer Beweis f\" ur die Invarianz der Dimensionszahl und des Gebietes, {\em Abh. Math. Sem. Univ. Hamburg} {\bf 6} (1928), 265--272.



\end{thebibliography}
\end{document}